\def\NZQ{\mathbb}               
\def\NN{{\NZQ N}}
\def\ZZ{{\NZQ Z}}
\def\RR{{\NZQ R}}
\newtheorem{Theorem}{Theorem}[section]
\newtheorem{Corollary}[Theorem]{Corollary}
\newtheorem{Remark}[Theorem]{Remark}
\newtheorem{Definition}[Theorem]{Definition}
\let\epsilon\varepsilon
\let\phi=\varphi
\let\kappa=\varkappa
\begin{document}

\title{Dependent Artin-Schreier defect extensions and strong monomialization}

\author{Samar ElHitti} \address{Department of Mathematics, New York City
College of Technology-Cuny, 300 Jay Street, Brooklyn, NY 11201, U.S.A.} \email{selhitti@citytech.cuny.edu}
\author {Laura Ghezzi} \address{Department of Mathematics, New York City
College of Technology-Cuny, 300 Jay Street, Brooklyn, NY 11201, U.S.A.} \email{lghezzi@citytech.cuny.edu}

\thanks{The authors would like to thank Franz-Viktor Kuhlmann for motivating this work, and S. Dale Cutkosky for many helpful conversations regarding the material in this paper.\\
The first author was supported by a PSC-CUNY Award, jointly funded by The Professional Staff Congress and The City University of New York.\\
The second author was partially supported by the Fellowship Leave from the New York City College of Technology-CUNY (Fall 2014 - Spring 2015).}

\begin{abstract}
\noindent
In this paper we affirmatively answer a question posed by F.-V. Kuhlmann. We show that the first Artin-Schreier defect extension in Cutkosky and Piltant's counter-example to strong monomialization is a dependent extension. Our main tool is the use of generating sequences of valuations.
\end{abstract}

\maketitle

\section{Introduction}\label{intro}

Let $K$ be a field and let $\nu$ be a valuation on $K$. The value group of $(K,\nu)$ is denoted by $\nu K$, and its residue field by $K\nu$. The value of an element $a$ is denoted by $\nu(a)$.
Let $L$ be a finite field extension of $K$. If the extension of $\nu$ from $K$ to $L$ is unique (denoted by $(L|K,\nu)$), the Lemma of Ostrowski says that
\begin{equation}\label{Os}[L:K]=(\nu L: \nu K)\cdot [L\nu : K \nu]\cdot p^s,\end{equation} with $s\geq 0$, where $p$ is the characteristic of $K\nu$ if it is positive, and $p=1$ if $K\nu$ has characteristic zero. The factor $p^s$ is called the {\it defect} (or ramification deficiency) of the extension $(L|K,\nu)$. If $p^s=1$ we call $(L|K,\nu)$ a defectless extension; otherwise we call it a defect extension. Note that $(L|K,\nu)$ is always defectless if char $K\nu=0$. The defect plays a key role in deep open problems in positive characteristic, such as local uniformization (the local form of resolution of singularities). Indeed the existence of the defect makes the problem of local uniformization much harder, as can be seen in \cite{KK1} and \cite{KK2}. We refer the reader to \cite{Ku10} for an excellent overview of the valuation theoretical phenomenon of the defect.

\medskip

Particular types of extensions that are ubiquitous in this set up are {\it Artin-Schreier extensions}. From now on we assume that all fields have characteristic $p>0$. An Artin-Schreier extension of $K$ is an extension of degree $p$ generated over $K$ by a root $\theta$ of a polynomial of the form $X^p-X-a$ with $a\in K$. Such $\theta$ is called an Artin-Schreier generator of $L=K(\theta)$ over $K$. An extension of degree $p$ of a field of characteristic $p$ is a Galois extension if and only if it is an Artin-Schreier extension \cite[Theorem VI.6.4]{L}. We assume that $L|K$ has defect. There is a unique extension of $\nu$ from $K$ to $L$ (see \cite[Lemma 2.31]{Ku19}). It follows from equation (\ref{Os}) that $L$ is an immediate extension of $K$; that is, $\nu L=\nu K$ and $L\nu=K\nu$.

\medskip

In \cite{Ku19} Kuhlmann classifies Artin-Schreier defect extensions
and gives the motivation for such a classification. If an Artin-Schreier defect extension is derived from a purely inseparable defect extension of degree $p$,
then we call it a {\it dependent }Artin-Schreier defect extension. If it cannot be derived in this way, then we call it an {\it independent} Artin-Schreier defect extension. The precise definitions are given in Section \ref{criteria}.

Kuhlmann (see \cite[Section 6.1]{Ku10}) raised the question of which of the Artin-Schreier defect extension are more ``harmful'', the dependent or the independent ones? He points out that there are some indications that the dependent ones are more harmful, for instance based on Temkin's work \cite{Tem}.

\medskip

In this paper we are interested in Cutkosky and Piltant's counterexample to strong monomialization.
In \cite[Theorem 7.38]{CP} they give an example of an extension $R\subset S$ of two-dimensional algebraic regular local rings over a field ${\bf k}$ of positive characteristic, and a valuation on the rational function field $Q(R)$. The extension $Q(S)|Q(R)$ is a tower of two Artin-Schreier defect extensions, such that strong monomialization (in the sense of \cite[Theorem 4.8]{CP}) does not hold for $R\subset S$. We will recall the example in more details in Section \ref{SM}.

Strong monomialization (\cite[Theorem 4.8]{CP} in characteristic zero and any dimension) is a very important result, as it implies simultaneous resolution, which is very useful for applications to local uniformization (see \cite{CP}, \cite{Ab2} for details). Cutkosky and Piltant further prove that strong monomialization holds over algebraically closed fields, in dimension two and positive characteristic, provided the field extension is defectless (\cite[Theorem 7.35]{CP}). Recently Cutkosky gave a counterexample to local and weak local monomialization in positive characteristic and any dimension greater than or equal to two (\cite[Theorem 1.4]{C}).

\medskip

Kuhlmann asked whether there is at least one dependent extension in the counterexample to strong monomialization -- a further indication that dependent extensions are more harmful.

We prove in Theorem \ref{maintheorem} that the first extension in the tower constructed in \cite[Theorem 7.38]{CP} is an Artin-Schreier dependent extension.

It is still an open problem whether strong monomialization holds when independent Artin-Schreier defect extensions are involved.

\medskip

We now state the layout of this paper.
In Section \ref{SM} we recall the counterexample to strong monomialization.
 In Section \ref{criteria} we recall the definition of dependent (and independent) Artin-Schreier defect extensions and establish that, in our context, it is enough to show that there exists $m \in \mathbb{N}_0$ such that for all $f \in K, \ \nu(\theta-f)< -\frac{1}{p^m}$. Finally, in Section \ref{main} we prove that the extension under consideration is dependent (Theorem \ref{maintheorem}). Since the valuation is determined by a generating sequence in the given ring $R$, the main point is to construct generating sequences in appropriate quadratic transforms $R_k$ of $R$ that allow to compute or bound values of certain key elements of $R_k$. We achieve the construction in Theorem \ref{genconstruction}, and we achieve the bound on the values of critical elements in $R_k$ in Theorem \ref{Theorem1}.

\section{Counterexample to strong monomialization}\label{SM}

In this section, we use the notation of \cite[Section 7.11] {CP}.
Let ${\bf k}$ be an algebraically closed field of characteristic $p>0$. Cutkosky and Piltant prove that there exists an inclusion $R\subset S$ of algebraic regular local rings of dimension $2$ over ${\bf k}$ such that $K^*|K=Q(S)|Q(R)$ is a tower of two Galois extensions of degree $p$, and a ${\bf k}$-valuation $\nu^*$ of $K^*$ with valuation ring $V^*$ such that
 $V^*|V$ has defect $p^2$ (where $V$ is the valuation ring of the restriction of $\nu^*$ to $K$).
 Let $R_r$ and $S_s$ be iterated quadratic transforms of $R$ and $S$ respectively, such that $R_r\subset S_s$. Then there exists a regular system of parameters $(u_r,v_r)$ of $R_r$ and a regular system of parameters $(x_s,y_s)$ of $S$ with such an expression:

\[
  \begin{array}{lll}
 u_r&=&\gamma_{r,s}x_s^p\\
 v_r&=&\delta_{r,s}y_s^p
 \end{array}
\]

where $\gamma_{r,s}, \delta_{r,s}$ are units in $S_s$. Therefore strong monomialization in the sense of \cite[Theorem 4.8]{CP} fails for the pair $R\subset S$ with respect to the extension of valuation rings $V^*|V$. See \cite[Theorem 7.38]{CP} for the full statement of their result.

\begin{Remark}{\rm Strong monomialization requires that the inclusion $R_r\subset S_s$ be given by
\[
  \begin{array}{lll}
 u_r&=&\gamma_{r,s}x_s^t\\
 v_r&=&\delta_{r,s}y_s
 \end{array}
\]

where $\gamma_{r,s}, \delta_{r,s}$ are units in $S_s$ and $t$ is a positive integer.}
\end{Remark}

To construct the extension $K^*|K$, let $c\ge 1$ be an integer such that $(p-1)|c$ and let $K^*={\bf k}(x,y)$ and $K={\bf k}(u,v)$ be two dimensional algebraic function fields, where

\[
  \begin{array}{lll}
 u&:=&x^p/(1-x^{p-1})\\
 v&:=&y^p-x^cy.
\end{array}
\]
Then $K^*|K$ is a finite separable extension of degree $p^2$. It is a tower of two degree $p$ Artin-Schreier extensions
 $$
 K\rightarrow K_1=K(x)\rightarrow K^*=K_1(y).
 $$

\medskip

We focus on the first extension $K(x)|K$. We have that $\frac{1}{x}$ is an Artin-Schreier generator with minimal polynomial $F(X)=X^p-X-\frac{1}{u}$.

\medskip

We recall that the field $K^*={\bf k}(x,y)$ has the valuation $\nu^{*}$ determined by the generating sequence
$Q_0=x$, $Q_1=y$, $Q_2=y^{p^2}-x$  and
$Q_{i+1}=Q_i^{p^2}-x^{p^{2i-2}}Q_{i-1}$ for $i\ge 2$ in $S={\bf k}[x,y]_{(x,y)}$ and the value group denoted by $\Gamma^* \simeq \displaystyle\bigcup_{i\geq 0}\frac{\mathbb{Z}}{p^i}$ (see \cite[Proposition 7.40]{CP}).

\medskip

The field $K={\bf k}(u,v)$ has the valuation $\nu$ obtained by restriction of $\nu^*$ to $K$. $\nu$ is determined by the generating sequence
$P_0=u$, $P_1=v$, $P_2=v^{p^2}-u$  and
$P_{i+1}=P_i^{p^2}-u^{p^{2i-2}}P_{i-1}$ for $i\ge 2$ in $R={\bf k}[u,v]_{(u,v)}$ (see \cite[Corollary 7.41]{CP}). We have that the value group $\Gamma=\Gamma^* \simeq \displaystyle\bigcup_{i\geq 0}\frac{\mathbb{Z}}{p^i}$. We denote the unique extension of $\nu$ to $K_1$ by $\nu_1$. Notice that $\nu_1$ is the restriction of $\nu^*$ to $K_1$.

\medskip

We prove in Theorem \ref{maintheorem} that $K(x)$ is a dependent Artin-Schreier defect extension of $K$. Our main tool is generating sequences, therefore we briefly recall the definition (as in \cite{S},  \cite[Section 7.5]{CP} or \cite[Section 2]{CV}).

\begin{Definition}\label{genseq} {\rm Let $R$ be an algebraic two dimensional regular local ring, let $K$ be the quotient field of $R$, and let $\nu$ be a valuation of $K$ centered in $R$.
Let $({\nu K})_+ = \nu(R \backslash \{0\})$ be the semigroup of $\nu K$
consisting of the values of nonzero elements of $R$. For $\gamma \in
({\nu K})_+$, let $I_{\gamma}=\{f\in R \mid \ \nu(f)\geq \gamma\}$. A
(possibly infinite) sequence $\{ P_i \}$ of elements of $R$ is a
{\it generating sequence} of $\nu$ if for every $\gamma
\in ({\nu K})_+$ the ideal $I_\gamma$ is generated by the set
$$\left\{\prod_{i}{P_i}^{a_i}\mid \ a_i\in \mathbb{N}_0,\
\sum_{i} a_i \nu(P_i)\geq \gamma\right\}.$$ }
\end{Definition}

\section{Artin-Schreier defect extensions and criteria for dependence}\label{criteria}

In this section we recall the definition of dependent and independent Artin-Schreier defect extensions as in \cite[Section 4.1]{Ku19}, or \cite[Section 6]{Ku10}. Then we assemble results from \cite{Ku10} and \cite{Ku19} to obtain a criterion to detect if an Artin-Schreier defect extension is dependent in the context of the example under study recalled in Section \ref{SM}.

\begin{Definition}\label{def}{\rm Let $(K(\theta)|K,\nu)$ be an Artin-Schreier defect extension of valued fields of characteristic $p>0$ with Artin-Schreier generator $\theta$ such that $\theta^p-\theta\in K$. We call
$(K(\theta)|K,\nu)$ a dependent extension if there exists an immediate purely inseparable extension $(K(\eta)|K,\nu)$ of degree $p$ such that for all $c\in K$, $\nu(\theta-c)=\nu(\eta-c)$.
An Artin-Schreier defect extension which is not dependent is called independent.}
\end{Definition}

See for instance \cite[Example 3.17]{Ku10} for an example of a dependent extension, and \cite[Example 3.12]{Ku10} for an independent one. Several other examples of dependent and independent Artin-Schreier defect extensions are given in \cite[Section 4.6]{Ku19}. We remark that
by \cite[Lemma 4.1]{Ku19}, Definition \ref{def} does not depend on the choice of the Artin-Schreier generator $\theta$.

\medskip

Now assume that the value group $\nu K$ is Archimedian; that is, it can be embedded by an order preserving isomorphism as a subgroup of $\RR$.

Since $K(\theta)$ is an immediate extension of $K$ and $\nu(\theta-c)<0$ for all $c\in K$ by  \cite[Corollary 2.30]{Ku19}, we have that $\nu(\theta - K) \subseteq (\nu K)^{<0},$ where $\nu(\theta-K)=\{\nu(\theta-c)\mid c\in K\}$ and $(\nu K)^{<0}=\{\nu(c)\mid c\in K , \ \nu(c)<0\}$.

By \cite[Theorem 6.1]{Ku10} and the discussion following it, we have that the extension $(K(\theta)|K, \nu)$ is independent if and only if $$\nu(\theta - K)=(\nu K)^{<0}.$$

\begin{Theorem} \label{thcriteria} Let $(K(\theta)|K,\nu)$ be an Artin-Schreier defect extension with $\theta^p-\theta \in K$. Assume that $\nu K=\displaystyle\bigcup_{i\geq 0}\dfrac{\mathbb{Z}}{p^i}$. Then:
\item[1)] $K(\theta)|K$ is independent if and only if for all $m \in \mathbb{N}_0$ there exists $f_m \in K$ such that  $\nu(\theta-f_m)\geq -\dfrac{1}{p^m}.$
\item[2)] $K(\theta)|K$ is dependent if and only if there exists $m \in \mathbb{N}_0$ such that for all $f \in K, \nu(\theta-f)< -\dfrac{1}{p^m}.$
\end{Theorem}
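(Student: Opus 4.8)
The plan is to derive Theorem~\ref{thcriteria} directly from the characterization stated just above it: the extension $(K(\theta)|K,\nu)$ is \emph{independent} if and only if $\nu(\theta-K)=(\nu K)^{<0}$, where $\nu K=\bigcup_{i\ge 0}\frac{\ZZ}{p^i}$. Since parts (1) and (2) are contrapositives of each other, I would prove (1) and then obtain (2) for free by negating both sides. For (1), I must show that the set equality $\nu(\theta-K)=(\nu K)^{<0}$ is equivalent to the arithmetic condition that for every $m\in\NN_0$ there is an $f_m\in K$ with $\nu(\theta-f_m)\ge -\frac{1}{p^m}$.

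First I would record the two inclusions that are always available. We already know $\nu(\theta-K)\subseteq(\nu K)^{<0}$ holds unconditionally (this is the displayed containment preceding the theorem, coming from \cite[Corollary 2.30]{Ku19}), so independence amounts purely to the reverse inclusion $(\nu K)^{<0}\subseteq\nu(\theta-K)$. Thus the content of (1) is that this reverse inclusion is equivalent to the stated sequence of approximations. The key structural fact I would exploit is that $(\nu K)^{<0}$ is \emph{cofinal from below} by the elements $-\frac{1}{p^m}$: every negative value $\gamma\in\nu K$ satisfies $\gamma\le -\frac{1}{p^m}$ for some $m$, since $\nu K=\bigcup_{i}\frac{\ZZ}{p^i}$ and the fractions $\frac{1}{p^m}$ form a decreasing sequence converging to $0$. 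This is the Archimedean/denseness feature of this particular value group that makes the single-sided family $\{-\frac{1}{p^m}\}$ sufficient to test membership.

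The forward direction of (1) is immediate: if independence holds then $\nu(\theta-K)=(\nu K)^{<0}$, so in particular each $-\frac{1}{p^m}\in\nu K$ is actually attained, i.e.\ there exists $f_m$ with $\nu(\theta-f_m)=-\frac{1}{p^m}\ge -\frac{1}{p^m}$. The substantive direction is the converse. Assuming the approximation condition, I would take an arbitrary $\gamma\in(\nu K)^{<0}$ and show $\gamma\in\nu(\theta-K)$. The heart of the argument is the observation that $\nu(\theta-K)$ is an \emph{initial segment} of $(\nu K)^{<0}$ that is closed under the obvious monotonicity: if some value $\delta$ is realized as $\nu(\theta-f)$ and $\gamma<\delta$ with $\gamma\in\nu K$, then choosing $g\in K$ with $\nu(g)=\gamma$ (possible since $\nu$ is surjective onto $\nu K$ on $K$) one checks that $\nu(\theta-(f+\text{correction}))$ can be pushed down to $\gamma$; more directly, if $\nu(\theta-f)=\delta>\gamma$ then adding to $f$ an element of value exactly $\gamma$ produces a difference of value $\gamma$. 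Combining this with the hypothesis that arbitrarily small (in absolute value, i.e.\ close to $0$) negative values $-\frac{1}{p^m}$ are realized or exceeded, and the cofinality remark above, every $\gamma<0$ lies below some realized value and hence is itself realized.

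The step I expect to be the main obstacle is making the ``pushing down'' precise: from a single approximation $\nu(\theta-f_m)\ge -\frac{1}{p^m}$ I need to manufacture an element realizing an \emph{arbitrary} prescribed negative value $\gamma$, not merely to know the supremum of $\nu(\theta-K)$ is $0$. The clean way to handle this is to note that for $f,g\in K$ the ultrametric inequality gives $\nu(\theta-(f+g))=\min\{\nu(\theta-f),\nu(g)\}$ whenever $\nu(g)\ne\nu(\theta-f)$, so by selecting $g$ with $\nu(g)=\gamma$ strictly smaller than the value $\nu(\theta-f_m)$ I force $\nu(\theta-(f_m+g))=\gamma$ exactly. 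Verifying that this equality genuinely lands $\gamma$ in $\nu(\theta-K)$, and that the hypothesis supplies an $f_m$ with $\nu(\theta-f_m)>\gamma$ for the given $\gamma$ (which follows by choosing $m$ large enough that $-\frac{1}{p^m}>\gamma$, using cofinality), is the crux; everything else is bookkeeping with the contrapositive to extract statement (2).
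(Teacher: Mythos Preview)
Your proposal is correct and follows essentially the same route as the paper. The paper reduces independence to the inclusion $(\nu K)^{<0}\subseteq\nu(\theta-K)$ and then invokes the fact that $\nu(\theta-K)$ is an initial segment of $\nu K$ (citing \cite[Lemma~1.1]{Ku10}) to conclude that realizing values $\ge -\tfrac{1}{p^m}$ for all $m$ forces every negative value to lie in $\nu(\theta-K)$; you instead reprove that initial-segment property by hand via the ultrametric identity $\nu(\theta-(f_m+g))=\min\{\nu(\theta-f_m),\nu(g)\}=\gamma$ when $\nu(g)=\gamma<\nu(\theta-f_m)$. Your argument is slightly more self-contained, while the paper's is shorter by outsourcing this step to the cited lemma, but the underlying idea and structure are identical.
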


\begin{proof} By the above discussion $(K(\theta)|K,\nu)$ is independent if and only if $(\nu K)^{<0}\subseteq \nu(\theta - K)$. Let  $-\dfrac{1}{p^m}\in(\nu K)^{<0}$, where  $m \in \mathbb{N}_0$. Let $f_m \in K$ be such that  $\nu(\theta-f_m)\geq -\dfrac{1}{p^m}$.
Since $\nu(\theta - K)$ is an initial segment of $\nu K$ (\cite[Lemma 1.1]{Ku10}), we have that $-\dfrac{1}{p^m}\in \nu(\theta - K)$. This proves 1). \end{proof}

\section{Main result}\label{main}

In this section, setup and notation are as in \cite[Theorem 7.38]{CP} and Section \ref{SM}.
Recall that the algebraic function field $K={\bf k}(u,v)$ has the valuation $\nu$ determined by the generating sequence
$P_0=u$, $P_1=v$, $P_2=v^{p^2}-u$  and
$P_{i+1}=P_i^{p^2}-u^{p^{2i-2}}P_{i-1}$ for $i\ge 2$ in $R={\bf k}[u,v]_{(u,v)}$. We denote the unique extension of $\nu$ to $K_1$ by $\nu_1$.

\medskip

We normalize $\nu$ so that
$\nu(u)=1$, from which we obtain the formulas $\nu(P_0)=1$ and
$$
\nu(P_i)=\sum_{j=0}^{i-1}p^{4j-2i}\mbox{ for }i\ge 1.
$$
Let
$$
R=R_0\rightarrow R_1\rightarrow R_2\rightarrow \cdots\rightarrow R_k\rightarrow \cdots
$$
be the sequence of iterated quadratic transforms along $\nu$, where $R_k$ are the local rings in the sequence of quadratic
transforms of $R$ along $\nu$ which are free; the divisor of $u$ in $R_k$ has a single irreducible component.
Let $m_k$ be the maximal ideal of $R_k$.

 We have that the valuation ring $V$ of $\nu$ is $V=\displaystyle\bigcup_{i=0}^{\infty}R_k$ (\cite{Ab1}, see also \cite[Corollary 7.41]{CP}).

\begin{Remark}\label{gs} {\rm Let $t_k\in R_k$ be irreducible such that $t_k=0$ is a local equation of the divisor of $u$ in $R_k$.
For $i\ge k+1$, let $n_{k,i}$ be the largest power of $t_k$ which divides $P_i$ in $R_k$.  By \cite{S}, the discussion
after  \cite[Definition 7.11]{CP} or \cite[Theorem 7.1]{CV}, we have that
$$
t_k,\frac{P_{k+1}}{t_k^{n_{k,k+1}}}
$$
is a regular system of parameters in $R_k$ and
$$
t_k,\frac{P_{k+1}}{t_k^{n_{k,k+1}}}, \frac{P_{k+2}}{t_k^{n_{k,k+2}}},\cdots, \frac{P_{k+j}}{t_k^{n_{k,k+j}}},\cdots
$$
is a generating sequence in $R_k$ for the valuation $\nu$. }
\end{Remark}

Next we explicitly construct generating sequences in the local rings $R_k$.

\begin{Theorem}\label{genconstruction} With notation as above, for $k\geq 0$ the local rings
$R_k$ have generating sequences
$$
u_k=P_{k,0}, \ v_k=P_{k,1}, \ P_{k,2},\cdots, P_{k,i}, \cdots
$$
which are defined recursively by the initial conditions
$$
u_0=u, \ v_0=v, \ P_{0,i}=P_i\mbox{ for }i\ge 0,
$$
and, for $k\ge 1$, by
\begin{equation}\label{eq2}
u_k=P_{k,0}=P_{k-1,1}, \ v_k=P_{k,1}=\frac{P_{k-1,2}}{u_k^{p^2}},
\end{equation}
and by
\begin{equation}\label{eq3}
P_{k,i}=\frac{P_{k-1,i+1}}{u_k^{p^{2i}}}\mbox{ for }i\ge 1.
\end{equation}
We thus have the formulas
\begin{equation}\label{eq1}
\nu(P_{k,0})=\frac{1}{p^{2k}},\ \nu(P_{k,1})=\frac{1}{p^{2k+2}}, \ \nu(P_{k,i})= \sum_{j=0}^{i-1}p^{4j-2i-2k}\mbox{ for }i\ge 1.
\end{equation}

\end{Theorem}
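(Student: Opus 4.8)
The plan is to argue by induction on $k$, carrying along not merely the three value formulas in (\ref{eq1}) but the full structural package: that $(u_k,v_k)=(P_{k,0},P_{k,1})$ is a regular system of parameters of $R_k$, that $\{P_{k,i}\}$ is a generating sequence of $\nu$ in $R_k$, and that $u_k$ equals, up to a unit, the local equation $t_k$ of the divisor of $u$, with $\operatorname{div}(u)=p^{2k}\{t_k=0\}$. For $k=0$ all of this is the data recalled in Section~\ref{SM}: $P_{0,i}=P_i$, the listed generating sequence, $t_0=u$, and the values $\nu(P_0)=1$, $\nu(P_i)=\sum_{j=0}^{i-1}p^{4j-2i}$.

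For the inductive step I would first analyze the passage from $R_{k-1}$ to $R_k$ as a chain of quadratic transforms. The numerical input from the inductive hypothesis is $\nu(u_{k-1})=1/p^{2(k-1)}$ and $\nu(v_{k-1})=1/p^{2k}$, so $\nu(u_{k-1})=p^2\nu(v_{k-1})$; hence $v_{k-1}$ carries the smaller value and the center of $\nu$ moves in the $v_{k-1}$-direction. Following the regular parameters $(v_{k-1},\,u_{k-1}/v_{k-1}^{\,j})$, the value $\nu(u_{k-1}/v_{k-1}^{\,j})=(p^2-j)\nu(v_{k-1})$ meets $\nu(v_{k-1})$ exactly at $j=p^2-1$, and one further transform makes $u_{k-1}/v_{k-1}^{\,p^2}$ a unit, producing a regular system of parameters of a new ring whose first coordinate is $v_{k-1}$. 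There $u_{k-1}=v_{k-1}^{\,p^2}\cdot(\text{unit})$, so, combined with the inductive relation $u=u_{k-1}^{\,p^{2(k-1)}}\cdot(\text{unit})$, we get $u=v_{k-1}^{\,p^{2k}}\cdot(\text{unit})$; thus the divisor of $u$ has the single component $\{v_{k-1}=0\}$, this ring is the next free transform $R_k$, and $t_k=v_{k-1}=P_{k-1,1}=u_k$ up to a unit. Comparing $u=v_{k-1}^{\,p^{2k}}\cdot(\text{unit})$ with $u=u_{k-1}^{\,p^{2(k-1)}}\cdot(\text{unit})$ then forces the crucial relation $t_{k-1}=u_{k-1}=t_k^{\,p^2}$ up to a unit of $R_k$.

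With this in hand I would identify the recursively defined elements with the generating sequence furnished by Remark~\ref{gs}, namely $t_k,\ P_{k+1}/t_k^{\,n_{k,k+1}},\ P_{k+2}/t_k^{\,n_{k,k+2}},\dots$. Using the inductive description $P_{k-1,i}=P_{(k-1)+i}/t_{k-1}^{\,n_{k-1,(k-1)+i}}$ up to a unit, the recursion (\ref{eq3}) together with $u_k=t_k$ and $t_{k-1}=t_k^{\,p^2}$ gives $P_{k,i}=P_{k+i}/t_k^{\,n_{k,k+i}}$ up to a unit, with $n_{k,k+i}=p^{2i}+p^{2}n_{k-1,k+i}$. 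This simultaneously shows each $P_{k,i}$ lies in $R_k$, that $\{P_{k,i}\}$ is exactly the Remark~\ref{gs} generating sequence up to units, and that (\ref{eq2}) holds. The values (\ref{eq1}) then follow by a direct computation from the inductive hypothesis and $\nu(P_{k,i})=\nu(P_{k-1,i+1})-p^{2i}\nu(u_k)$: the top term $p^{4i-2i-2k}=p^{2i-2k}$ of $\nu(P_{k-1,i+1})$ is cancelled by $p^{2i}\nu(u_k)=p^{2i-2k}$, leaving $\sum_{j=0}^{i-1}p^{4j-2i-2k}$, and the formulas for $\nu(P_{k,0})$ and $\nu(P_{k,1})$ are the special instances.

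I expect the main obstacle to be the blow-up bookkeeping of the second and third paragraphs: verifying that it is $v_{k-1}$, and not $u_{k-1}$, that becomes the new local equation $t_k$, and pinning down the exact divisibility exponents. The subtlety is that each $P_{k-1,i}$ is primitive (not divisible by $t_{k-1}$) in $R_{k-1}$ yet acquires divisibility by the new parameter $u_k$ in $R_k$, and one must show the maximal such power is precisely $p^{2i}$; this is exactly what the relation $t_{k-1}=t_k^{\,p^2}$ encodes, and establishing that relation correctly across the $p^2$ intermediate, non-free transforms is the delicate step.
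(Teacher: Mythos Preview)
Your inductive scheme and the identification of $t_k$ with $u_k=P_{k-1,1}$ are correct and match the paper's setup. The blow-up analysis in your second paragraph is also sound: after $p^2$ quadratic transforms the divisor of $u$ contracts to the single component $\{v_{k-1}=0\}$, so this ring is $R_k$ and $t_k=v_{k-1}=u_k$ up to a unit, with $t_{k-1}=t_k^{\,p^2}$ up to a unit.

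The gap is precisely the step you flag at the end: showing that the $u_k$-order of $P_{k-1,i+1}$ in $R_k$ is \emph{exactly} $p^{2i}$. This is \emph{not} encoded by the relation $t_{k-1}=t_k^{\,p^2}$. That relation only tells you how $t_{k-1}$ factors in $R_k$; but $P_{k-1,i+1}$ is coprime to $t_{k-1}$ in $R_{k-1}$, and its $u_k$-order after the blow-ups depends on its internal structure as an element of $\mathbf{k}[[u_{k-1},v_{k-1}]]$ (roughly, its order of contact with $\{v_{k-1}=0\}$), which your inductive package does not track. Without this you cannot conclude that $P_{k,i}\in R_k$, nor that it coincides (up to a unit) with the strict transform $P_{k+i}/t_k^{\,n_{k,k+i}}$ of Remark~\ref{gs}, nor even that $v_k=P_{k-1,2}/u_k^{p^2}$ is a regular parameter (for this last point you implicitly need the residue of $u_{k-1}/v_{k-1}^{p^2}$ to be $1$, not merely nonzero).

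The paper closes this gap by strengthening the inductive hypothesis with a third condition C(3,$k$): the $P_{k,i}$ themselves satisfy a recursion
\[
P_{k,2}=v_k^{p^2}-\gamma_{k,2}u_k,\qquad
P_{k,i}=P_{k,i-1}^{p^2}-\gamma_{k,i}\,u_k^{p^{2(i-2)}}P_{k,i-2}\ \ (i\ge 3),
\]
with units $\gamma_{k,i}\equiv 1\bmod m_k^2$. Carrying this explicit form lets one substitute $u_k=u_{k+1}^{p^2}(s_{k+1}+1)$, $v_k=u_{k+1}$ and read off directly (i) that the residue of $u_k/v_k^{p^2}$ is $1$, so $v_{k+1}$ is a regular parameter, and (ii) by an inner induction on $i$, that $u_{k+1}^{\,p^{2(i-1)}}$ is the exact power of $u_{k+1}$ dividing $P_{k,i}$ in $R_{k+1}$. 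Adding this recursion to your inductive package is exactly the missing ingredient; with it, your outline becomes the paper's proof.
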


\begin{proof}
Define
$u_0=u, v_0=v$ and $P_{0,i}=P_i$ for $i\ge 0$.

We will inductively construct
$$
P_{k,0},P_{k,1},\ldots,P_{k,i},\ldots
$$
 in $R_k$ for $k\ge 1$ such that the following three conditions are satisfied:
\vskip .2truein

\begin{enumerate}

\item[C(1,k):] Set
$$
u_k=P_{k,0}, \ v_k=\frac{P_{k-1,2}}{u_k^{p^2}}.
$$
Then $u_k,v_k$ are regular parameters in $R_k$ such that
$u=u_k^{p^{2k}}\tau_k$,  where $\tau_k$ is a unit in $R_k$.

\vskip .2truein

\item[C(2,k):] Set
$$
P_{k,i}=\frac{P_{k-1,i+1}}{u_k^{p^{2i}}}
$$
for $i\ge 1$. Then
$$
u_k=P_{k,0}, \ v_k=P_{k,1},\ P_{k,2},\ldots, P_{k,i}, \ldots
$$
is a generating sequence for $\nu$ in $R_k$.

\vskip .2truein

\item[C(3,k):] For $i\ge 2$, there exist units $\gamma_{k,i}\in R_k$ such that
$$
P_{k,i}=\left\{\begin{array}{ll}
v_k^{p^2}-\gamma_{k,2}u_k=P_{k,1}^{p^2}-\gamma_{k,2}P_{k,0}&\mbox{ if }i=2\\
P_{k,i-1}^{p^2}-\gamma_{k,i}u_k^{p^{2(i-2)}}P_{k,i-2}&\mbox{ if }i\ge 3
\end{array}\right.
$$
and $\gamma_{k,i}\equiv 1\mbox{ mod }m_k^2$.
\end{enumerate}

\vskip .2truein

First, from the sequence $\{P_{0,i}\}$, we construct the sequence $\{P_{1,i}\}$ which satisfies C(1,1), C(2,1) and C(3,1), using a simplification of the following argument for constructing $\{P_{k+1,i}\}$ for $k\ge 1$. We note that the simplification in this first step arises from the fact that the units $\gamma_{k,0}\in R_0$ are all equal to $1$.

\vskip .2truein

Now assume that $k\ge 1$ and that we have constructed the sequence $\{P_{j,i}\}$ for $j\le k$, such that C(1,j), C(2,j) and C(3,j) hold for $j\le k$.
We construct the sequence $\{P_{k+1,i}\}$ such that C(1,k+1), C(2,k+1) and C(3,k+1) hold.

Define $u_{k+1}, s_{k+1}$ by
\begin{equation}\label{eq12}
u_k=u_{k+1}^{p^2}(s_{k+1}+1), \ v_k=u_{k+1}.
\end{equation}
Equation (\ref{eq12}) defines a sequence of quadratic transforms
$$
R_k\rightarrow T_{k+1}:= R_k[s_{k+1}]_{(u_{k+1},s_{k+1})}.
$$
In the factorization of $R_k\rightarrow T_{k+1}$ by quadratic transforms, $T_{k+1}$ is the first local ring in which the divisor of $u_k$ (and of $u$)
has a single irreducible component.
Since
$$
\nu(v_k^{p^2}-u_k)=\nu(v_k^{p^2}-\gamma_{k,2}u_k+\gamma_{k,2}u_k-u_k)>\nu(u_k)=\nu(v_k^{p^2})
$$
by C(2,k) and C(3,k), we have that $\nu(s_{k+1})>0$. Therefore $\nu$ dominates $T_{k+1}$ and  $R_{k+1}=T_{k+1}$. Thus
$u_{k+1}, s_{k+1}$ are regular parameters in $R_{k+1}$. By C(1,k), we have that
$$
u=u_k^{p^{2k}}\tau_k=u_{k+1}^{p^{2(k+1)}}\tau_{k+1},
$$
where
$$
\tau_{k+1}=(s_{k+1}^{p^{2k}}+1)\tau_k.
$$

We have
$$
\begin{array}{lll}
P_{k,2}&=& v_k^{p^2}-\gamma_{k,2}u_k\\
&=& u_{k+1}^{p^2}-\gamma_{k,2}u_{k+1}^{p^2}(s_{k+1}+1)\\
&=& u_{k+1}^{p^2}((1-\gamma_{k,2})-\gamma_{k,2}s_{k+1}).
\end{array}
$$
Thus
$$
v_{k+1}=\frac{P_{k,2}}{u_{k+1}^{p^2}}\in R_{k+1}
$$
and $u_{k+1},v_{k+1}$ are regular parameters in $R_{k+1}$. In particular, C(1,k+1) holds.

\vskip .2truein

For $i\ge 2$, using C(2,k), C(3,k) and equation (\ref{eq12}), we have
$$
\begin{array}{lll}
u_{k+1}^{p^{2i}}P_{k+1,i}&=& P_{k,i+1}=P_{k,i}^{p^2}-\gamma_{k,i+1}u_k^{p^{2(i-1)}}P_{k,i-1}\\
&=&P_{k,i}^{p^2}-\gamma_{k,i+1}[u_{k+1}^{p^2}(s_{k+1}+1)]^{p^{2(i-1)}}P_{k,i-1}\\
&=& P_{k,i}^{p^2}-[\gamma_{k,i+1}(s_{k+1}^{p^{2(i-1)}}+1)]u_{k+1}^{p^{2i}}P_{k,i-1}.
\end{array}
$$
Thus for $i=2$ we have
$$
\begin{array}{lll}
u_{k+1}^{p^4}P_{k+1,2}&=& [u_{k+1}^{p^2}P_{k+1,1}]^{p^2}-[\gamma_{k,3}(s_{k+1}^{p^2}+1)]u_{k+1}^{p^4+1}\\
&=& u_{k+1}^{p^4}[P_{k+1,1}^{p^2}-\gamma_{k,3}(s_{k+1}^{p^2}+1)u_{k+1}].
\end{array}
$$
For $i\ge 3$ we have
$$
\begin{array}{lll}
u_{k+1}^{p^{2i}}P_{k+1,i}&=&
[u_{k+1}^{p^{2(i-1)}}P_{k+1,i-1}]^{p^2}-[\gamma_{k,i+1}(s_{k+1}^{p^{2(i-1)}}+1)]u_{k+1}^{p^{2i}+p^{2(i-2)}}P_{k+1,i-2}\\
&=& u_{k+1}^{p^{2i}}[P_{k+1,i-1}^{p^2}-\gamma_{k,i+1}(s_{k+1}^{p^{2(i-1)}}+1)u_{k+1}^{p^{2(i-2)}}P_{k+1,i-2}].
\end{array}
$$

For $i\geq 2$ set $\gamma_{k+1,i}=\gamma_{k,i+1}(s_{k+1}^{p^{2(i-1)}}+1)$. We have that $\gamma_{k+1,i}\in R_{k+1}$
 and $\gamma_{k+1,i}\equiv 1\mbox{ mod }m_{k+1}^2$. Therefore C(3,k+1) holds.

 \vskip .2truein

By induction on $i$ (using C(3,k)) we see that for $i\geq 2$ the largest power of $u_{k+1}$ which divides $P_{k,i}$ in $R_{k+1}$ is $p^{2(i-1)}$. It follows from C(2,k) and Remark \ref{gs} that C(2,k+1) holds.
 \end{proof}

 \vskip .2truein

\begin{Corollary}
With notation as above, for $k\ge 0$ and $i\ge 2$, there exist $\Lambda_{k,i}\in R_k$ such that
\begin{equation}\label{eq4}
P_{k,i}=\left\{\begin{array}{ll}
P_{k,1}^{p^2}-P_{k,0}+\Lambda_{k,2}&\mbox{ if }i=2\\
P_{k,i-1}^{p^2}-P_{k,0}^{p^{2(i-2)}}P_{k,i-2}+\Lambda_{k,i}&\mbox{ if }i\ge 3
\end{array}\right.
\end{equation}
where
\begin{equation}\label{eq5}
\nu(\Lambda_{k,i})\ge \sum_{j=1}^{i-1}p^{4j-2i-2k}+p^{4-2i-2k}.
\end{equation}
\end{Corollary}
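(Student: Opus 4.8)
The plan is to read $\Lambda_{k,i}$ directly off condition C(3,k) in Theorem \ref{genconstruction} and then to control its value through a sharp estimate on $\nu(1-\gamma_{k,i})$. Writing $\gamma_{k,i}=1-(1-\gamma_{k,i})$ in the two formulas of C(3,k) forces the choices $\Lambda_{k,2}=(1-\gamma_{k,2})P_{k,0}$ and $\Lambda_{k,i}=(1-\gamma_{k,i})P_{k,0}^{p^{2(i-2)}}P_{k,i-2}$ for $i\ge 3$; with these, (\ref{eq4}) holds by inspection, and the case $k=0$ is trivial since all $\gamma_{0,i}=1$, so $\Lambda_{0,i}=0$. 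As $\Lambda_{k,i}$ is a product, its value is the sum of the values of its factors, which are given by (\ref{eq1}). A short reindexing of the resulting sum shows that, for each $k\ge 1$ and $i\ge 2$, the desired inequality (\ref{eq5}) is \emph{equivalent} to the single estimate
$$\nu(1-\gamma_{k,i})\ge p^{4-2i-2k}\qquad(i\ge 2).$$
Thus the whole corollary reduces to this estimate. I note that the crude bound $\gamma_{k,i}\equiv 1\bmod m_k^2$ from C(3,k), which only yields $\nu(1-\gamma_{k,i})\ge 2\nu(v_k)=2p^{-2k-2}$, already suffices when $i\ge 3$ but fails precisely at $i=2$, so a finer argument is genuinely needed.

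The next step is to compute $\nu(s_{k+1})$ exactly. From (\ref{eq12}) we have $u_{k+1}=v_k$ and $s_{k+1}=(u_k-v_k^{p^2})/v_k^{p^2}$, hence $\nu(s_{k+1})=\nu(v_k^{p^2}-u_k)-p^2\nu(v_k)$. Using C(3,k) I rewrite $v_k^{p^2}-u_k=P_{k,2}+(\gamma_{k,2}-1)u_k$; here the crude bound $\nu(\gamma_{k,2}-1)\ge 2p^{-2k-2}$ already suffices to show $\nu((\gamma_{k,2}-1)u_k)>\nu(P_{k,2})$, since comparing with the value $\nu(P_{k,2})=p^{-2k}+p^{-2k-4}$ from (\ref{eq1}) reduces to $p^{-2k-4}<2p^{-2k-2}$. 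Therefore $\nu(v_k^{p^2}-u_k)=\nu(P_{k,2})$, and after subtracting $p^2\nu(v_k)=p^{-2k}$ I obtain $\nu(s_{k+1})=p^{-2k-4}$ for all $k\ge 0$.

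Finally I prove $\nu(1-\gamma_{k,i})\ge p^{2i-2k-4}$ for $k\ge 1$ and $i\ge 2$ by induction on $k$; this suffices for the reduced estimate because $2i-2k-4\ge 4-2i-2k$ exactly when $i\ge 2$. The base case $k=1$ is immediate: since $\gamma_{0,i}=1$, the recursion $\gamma_{k+1,i}=\gamma_{k,i+1}(s_{k+1}^{p^{2(i-1)}}+1)$ from the proof of Theorem \ref{genconstruction} gives $1-\gamma_{1,i}=-s_1^{p^{2(i-1)}}$, whence $\nu(1-\gamma_{1,i})=p^{2(i-1)}\nu(s_1)=p^{2i-6}$. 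For the inductive step I expand $1-\gamma_{k+1,i}=(1-\gamma_{k,i+1})-\gamma_{k,i+1}s_{k+1}^{p^{2(i-1)}}$ and estimate
$$\nu(1-\gamma_{k+1,i})\ge\min\{\nu(1-\gamma_{k,i+1}),\,p^{2(i-1)}\nu(s_{k+1})\}\ge\min\{p^{2i-2k-2},\,p^{2i-2k-6}\}=p^{2i-2k-6},$$
using the inductive hypothesis and $\nu(s_{k+1})=p^{-2k-4}$; this is exactly the bound at level $k+1$. Inserting $\nu(1-\gamma_{k,i})\ge p^{2i-2k-4}\ge p^{4-2i-2k}$ into the value computation of the first paragraph then yields (\ref{eq5}). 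The main obstacle is the exact determination of $\nu(s_{k+1})$ and the closing of this recursion across all indices $i$ simultaneously; once that is in place, the rest is bookkeeping with the value formulas (\ref{eq1}).
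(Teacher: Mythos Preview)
Your proof is correct, and it takes a genuinely different route from the paper's.

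The paper proves the corollary by induction on $k$: it expands $P_{k,i}=P_{k-1,i+1}/u_k^{p^{2i}}$ using the level-$(k-1)$ relations, obtaining an explicit three-term expression
\[
\Lambda_{k,i}=u_k^{p^{2(i-2)}}v_k^{p^{2i-2}}P_{k,i-2}-\frac{\Lambda_{k-1,2}^{p^{2i-2}}P_{k-1,i-1}}{u_k^{p^{2i}}}+\frac{\Lambda_{k-1,i+1}}{u_k^{p^{2i}}}
\]
(with the obvious modification for $i=2$), and then checks the value bound for each of the three summands separately using the inductive hypothesis on $\Lambda_{k-1,2}$ and $\Lambda_{k-1,i+1}$.

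Your approach instead reads $\Lambda_{k,i}=(1-\gamma_{k,i})\,P_{k,0}^{p^{2(i-2)}}P_{k,i-2}$ directly off C(3,$k$) and reduces (\ref{eq5}) to the single estimate $\nu(1-\gamma_{k,i})\ge p^{4-2i-2k}$. You then prove the stronger bound $\nu(1-\gamma_{k,i})\ge p^{2i-2k-4}$ by a clean induction on $k$ through the recursion $\gamma_{k+1,i}=\gamma_{k,i+1}(1+s_{k+1}^{p^{2(i-1)}})$, after computing $\nu(s_{k+1})=p^{-2k-4}$ exactly. This is more conceptual: it isolates the single quantity $\nu(1-\gamma_{k,i})$ that governs everything, explains transparently why $i=2$ is the critical case (the crude bound from $\gamma_{k,i}\equiv 1\bmod m_k^2$ already handles $i\ge 3$), and in fact yields a sharper estimate on $\nu(\Lambda_{k,i})$ for large $i$. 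The paper's approach, by contrast, produces an explicit decomposition of $\Lambda_{k,i}$ in terms of lower-level $\Lambda$'s, which is not needed here but displays more of the recursive structure.
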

\begin{proof}
We use induction on $k$. The statements are true for $k=0$, where $\Lambda_{0,i}=0$. Assume that $k\geq 1$. Using equations (\ref{eq2}) and  (\ref{eq3}) and the induction hypothesis we have that
$$
\begin{array}{lll}
P_{k,i}&=&\dfrac{P_{k-1,i+1}}{u_k^{p^{2i}}}=\dfrac{P_{k-1,i}^{p^2}-P_{k-1,0}^{p^{2i-2}}P_{k-1,i-1}+\Lambda_{k-1,i+1}}{u_k^{p^{2i}}}\\
&=& \dfrac{P_{k-1,i}^{p^2}-(P_{k-1,1}^{p^2}+\Lambda_{k-1,2}-P_{k-1,2})^{p^{2i-2}}P_{k-1,i-1}+\Lambda_{k-1,i+1}}{u_k^{p^{2i}}}\\
&=&
P_{k,i-1}^{p^2}+\dfrac{(u_k^{p^{2i}}v_k^{p^{2i-2}}-u_k^{p^{2i}}-\Lambda_{k-1,2}^{p^{2i-2}})P_{k-1,i-1}}{u_k^{p^{2i}}}
+\dfrac{\Lambda_{k-1,i+1}}{u_k^{p^{2i}}}\\
&=&
P_{k,i-1}^{p^2}-P_{k-1,i-1}+v_k^{p^{2i-2}}P_{k-1,i-1}-\dfrac{\Lambda_{k-1,2}^{p^{2i-2}}P_{k-1,i-1}}{u_k^{p^{2i}}}
+\dfrac{\Lambda_{k-1,i+1}}{u_k^{p^{2i}}}.
\end{array}
$$
Thus we have that
$$
P_{k,2}=P_{k,1}^{p^2}-u_k+u_kv_k^{p^2}-\frac{\Lambda^{p^2}_{k-1,2}P_{k-1,1}}{u_k^{p^4}}+\frac{\Lambda_{k-1,3}}{u_k^{p^4}},
$$
and for $i\ge 3$,
$$
\begin{array}{lll}
P_{k,i}
&=& P_{k,i-1}^{p^2}-P_{k,i-2}u_k^{p^{2(i-2)}}+
u_k^{p^{2(i-2)}}v_k^{p^{2i-2}}P_{k,i-2}-\dfrac{\Lambda_{k-1,2}^{p^{2i-2}}P_{k-1,i-1}}{u_k^{p^{2i}}}
+\dfrac{\Lambda_{k-1,i+1}}{u_k^{p^{2i}}}.
\end{array}
$$

Set \begin{equation}\label{eq6}
\Lambda_{k,2}=u_kv_k^{p^{2}}-\frac{\Lambda_{k-1,2}^{p^{2}}P_{k-1,1}}{u_k^{p^{4}}}
+\frac{\Lambda_{k-1,3}}{u_k^{p^{4}}},
\end{equation}
and for $i\ge 3$,
\begin{equation}\label{eq66}
\Lambda_{k,i}=u_k^{p^{2(i-2)}}v_k^{p^{2i-2}}P_{k,i-2}-\frac{\Lambda_{k-1,2}^{p^{2i-2}}P_{k-1,i-1}}{u_k^{p^{2i}}}
+\frac{\Lambda_{k-1,i+1}}{u_k^{p^{2i}}}.
\end{equation}
In order to verify equation (\ref{eq5}), it suffices to show that the values of each of the three terms in equations (\ref{eq6}) and (\ref{eq66}) satisfy that bound. We use equation (\ref{eq1}) and the induction hypothesis. We have
$$
\nu(u_kv_k^{p^2})=2p^{-2k}.
$$
If $i\ge 3$,
$$
\begin{array}{lll}
\nu(u_k^{p^{2(i-2)}}v_k^{p^{2i-2}}P_{k,i-2})&=& 2p^{2i-2k-4}+\sum_{j=0}^{i-3}p^{4j-2i+4-2k}\\
&=& 2p^{2i-2k-4}+\sum_{j=1}^{i-2}p^{4j-2i-2k}\\
&=& \sum_{j=1}^{i-1}p^{4j-2i-2k}+p^{2i-2k-4}\\
&\ge& \sum_{j=1}^{i-1}p^{4j-2i-2k}+p^{4-2i-2k}.
\end{array}
$$
For $i\geq 2$,
$$
\begin{array}{lll}
\nu\left(\dfrac{\Lambda_{k-1,2}^{p^{2i-2}}P_{k-1,i-1}}{u_k^{p^{2i}}}\right)&=&
\nu(\Lambda_{k-1,2}^{p^{2i-2}})+\nu(P_{k-1,i-1})-p^{2i}\nu(u_k)\\
&\ge&
p^{2i-2}(2p^{-2k+2})+\sum_{j=0}^{i-2}p^{4j-2i-2k+4} -p^{2i-2k}\\
&=& p^{2i-2k}+\sum_{j=1}^{i-1}p^{4j-2i-2k}\\
&\ge& p^{4-2i-2k}+\sum_{j=1}^{i-1}p^{4j-2i-2k},
\end{array}
$$
and
$$
\begin{array}{lll}
\nu\left(\dfrac{\Lambda_{k-1,i+1}}{u_k^{p^{2i}}}\right)
&\ge& \sum_{j=1}^ip^{4j-2i-2k}+p^{2i-2k}+p^{4-2i-2k}-p^{2i-2k}\\
&\ge& \sum_{j=1}^{i-1}p^{4j-2i-2k}+p^{4-2i-2k}.
\end{array}
$$
This completes the proof.
\end{proof}

\vskip .2truein

Set
\begin{equation*}
\Omega=\sum_{i=0}^{\infty}\left(\frac{1}{p^4}\right)^i=\frac{p^4}{p^4-1}.
\end{equation*}

Consider the Artin-Schreier extension $K_1=K(x)$ of $K$. We have that  $x$ is defined by
$$
u=\frac{x^p}{1-x^{p-1}}.
$$
Recall that $\nu_1$ is
the extension of $\nu$ to $K_1$. Write
\begin{equation}
\label{eq10}
x^p=u+h,
\end{equation}
where $\nu_1(h)=1+\dfrac{p-1}{p}=2-\dfrac{1}{p}$.
We have
\begin{equation*}
\left( 2-\frac{1}{p} \right)-\frac{p^4}{p^4-1}=\frac{p^4(p-1)-2p+1}{p(p^4-1)}>\frac{p^4-2p}{p(p^4-1)}=\frac{p^3-2}{(p^4-1)}>0,
\end{equation*}
since $p\ge 2$. Thus
\begin{equation}\label{eq8}
\nu_1(h)>\Omega.
\end{equation}
We further have
\begin{equation}\label{eq9}
-\frac{2}{p}+\frac{1}{p}\Omega<-\frac{1}{p^2},
\end{equation}
since $\dfrac{1}{p^2}-\dfrac{2}{p}+\dfrac{1}{p}\Omega=-\dfrac{1}{p}(\nu_1(h)-\Omega)$.

\vskip .2truein

\vskip .2truein

\begin{Theorem}\label{Theorem1} With the above notation, for $k\ge 0$, we have the following:
\begin{enumerate}
\item[1)] There exists $h_k\in R_k$ and  $\Theta_k\in K_1=K(x)$ such that
$$
h_k^p-x^p=(-1)^kP_{k,k+2}+\Theta_k
$$
where $\nu_1(\Theta_k)>\Omega$.
Thus
 $$
\nu_1(h_k^p-x^p)=1+\frac{1}{p^4}+\cdots+\frac{1}{p^{4k}}+\frac{1}{p^{4(k+1)}}.
$$
\item[2)] If $g\in R_k$ then
$$
\nu_1(g^p-x^p)\le 1+\frac{1}{p^4}+\cdots+\frac{1}{p^{4k}}+\frac{1}{p^{4(k+1)}}.
$$
\end{enumerate}
\end{Theorem}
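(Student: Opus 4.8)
The plan is to prove both statements by exploiting the additivity of Frobenius, namely $(a-b)^p=a^p-b^p$ in characteristic $p$, which converts every assertion about $g^p-x^p$ into one about approximating $x$ by elements of $R_k$. For part 1) I would argue by induction on $k$. For the base case $k=0$, set $h_0=v^p$; then $h_0^p-x^p=v^{p^2}-u-h=P_{0,2}-h$ by (\ref{eq10}), so $\Theta_0=-h$ works, since $\nu_1(\Theta_0)=\nu_1(h)>\Omega$ by (\ref{eq8}). I would then record the arithmetic fact $\big((-1)^k\big)^p=(-1)^k$, which holds because $\pm1\in\mathbb{F}_p$ is fixed by Frobenius.

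The inductive step $k\to k+1$ rests on the identity
\[
P_{k,k+2}=P_{k+1,k+2}^{p^2}-P_{k+1,k+3}+\Lambda_{k+1,k+3},
\]
which I would obtain by combining (\ref{eq3}), giving $P_{k,k+2}=u_{k+1}^{p^{2(k+1)}}P_{k+1,k+1}=P_{k+1,0}^{p^{2(k+1)}}P_{k+1,k+1}$, with the instance $i=k+3$ of (\ref{eq4}) at level $k+1$. Substituting this into the inductive hypothesis $h_k^p-x^p=(-1)^kP_{k,k+2}+\Theta_k$ isolates the term $(-1)^kP_{k+1,k+2}^{p^2}=\big((-1)^kP_{k+1,k+2}^{p}\big)^p$, a $p$-th power. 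Setting $h_{k+1}=h_k-(-1)^kP_{k+1,k+2}^{p}\in R_{k+1}$ and applying Frobenius yields $h_{k+1}^p-x^p=(-1)^{k+1}P_{k+1,k+3}+\Theta_{k+1}$ with $\Theta_{k+1}=\Theta_k+(-1)^k\Lambda_{k+1,k+3}$. Using the bound (\ref{eq5}) one checks $\nu(\Lambda_{k+1,k+3})>\Omega$, so $\nu_1(\Theta_{k+1})>\Omega$; the stated value then follows from (\ref{eq1}), because $\nu(P_{k,k+2})=\sum_{l=0}^{k+1}p^{-4l}$ is a partial sum of $\Omega$, hence strictly smaller than $\nu_1(\Theta_k)$, so it is the dominant term.

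For part 2), write $\gamma=\nu(P_{k,k+2})$ and suppose toward a contradiction that $\nu_1(g^p-x^p)>\gamma$ for some $g\in R_k$. By part 1) we have $\nu_1(h_k^p-x^p)=\gamma$, so necessarily $g\neq h_k$ and $\nu_1(g^p-h_k^p)=\gamma$. Since $g^p-h_k^p=(g-h_k)^p$, this forces $\nu(g-h_k)=\gamma/p$ for the nonzero element $g-h_k\in R_k$. It therefore suffices to show that $\gamma/p$ does not lie in the value semigroup $\nu(R_k\setminus\{0\})$, which by the generating sequence property (Definition \ref{genseq}, Remark \ref{gs}, and Theorem \ref{genconstruction}) is the numerical semigroup generated by the values $\nu(P_{k,i})$, $i\ge0$.

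This last point is where the real work lies, and I would carry it out with the $p$-adic valuation $v_p$ on $\nu K\subseteq\mathbb{Q}$. From (\ref{eq1}) one writes $\nu(P_{k,i})=p^{-2i-2k}\,(1+p^4+\cdots+p^{4(i-1)})$ whose second factor is prime to $p$, so $v_p(\nu(P_{k,i}))=-2i-2k$. Now $\gamma/p=\sum_{l=0}^{k+1}p^{-4l-1}<1$, whereas $\nu(P_{k,i})\ge1$ for $i\ge k+2$, so any representation $\gamma/p=\sum_i a_i\nu(P_{k,i})$ with $a_i\in\mathbb{N}_0$ uses only generators with $i\le k+1$, each having $v_p(\nu(P_{k,i}))=-2i-2k\ge-4k-2$; as $v_p(a_i)\ge0$, the ultrametric inequality gives $v_p(\gamma/p)\ge-4k-2$. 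But $v_p(\gamma/p)=-4k-5$, a contradiction, so $\gamma/p\notin\nu(R_k\setminus\{0\})$ and part 2) follows. I expect the main obstacle to be precisely this step: identifying the value semigroup of $R_k$ with $\langle\nu(P_{k,i})\rangle$ and extracting the size/parity obstruction through $v_p$; everything else is bookkeeping resting on Theorem \ref{genconstruction} and its Corollary.
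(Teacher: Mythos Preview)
Your argument is correct. Part 1) is essentially identical to the paper's proof: same base case $h_0=P_{0,1}^p$, same inductive identity $P_{k,k+2}=P_{k+1,k+2}^{p^2}-P_{k+1,k+3}+\Lambda_{k+1,k+3}$ coming from (\ref{eq3}) and (\ref{eq4}), same correction $h_{k+1}=h_k+(-1)^{k+1}P_{k+1,k+2}^p$, and the same estimate $\nu(\Lambda_{k+1,k+3})>\Omega$ from (\ref{eq5}).

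For part 2) your route differs from the paper's and is a bit cleaner. The paper also sets $g'=g-h_k$, but then invokes \cite[Lemma 5.1]{GK} to write $g'=\lambda G+H$ with $G=u_k^mP_{k,1}^{a_1}\cdots P_{k,n}^{a_n}$ and the restriction $0\le a_i<p^2$; it then equates $\nu_1(G^p)$ with $\nu_1(P_{k,k+2})$ and argues, by comparing the lowest powers of $p$ on each side and using $a_n<p^2$, that one is forced to $n=k+3$, $a_{k+3}=p$, which is incompatible with $\beta\ge p\,\nu(P_{k,k+3})$. You instead observe that it suffices to exclude $\gamma/p$ from the value semigroup $\nu(R_k\setminus\{0\})=\langle \nu(P_{k,i}):i\ge 0\rangle$, and do so by a two-line $p$-adic/size obstruction: $\gamma/p<1$ rules out contributions from $i\ge k+2$, while $v_p(\nu(P_{k,i}))=-2i-2k\ge -4k-2$ for $i\le k+1$ forces $v_p(\gamma/p)\ge -4k-2$, contradicting $v_p(\gamma/p)=-4k-5$. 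Your approach avoids both the external lemma and the exponent bound $a_i<p^2$; the paper's approach, on the other hand, makes the role of the \emph{restricted} monomial expansion explicit and would generalize more readily to situations where the value-semigroup containment alone is not sharp enough. Both arguments ultimately encode the same parity/denominator obstruction.
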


\begin{proof} We first prove statement 1), using induction on $k$. In the case $k=0$, we observe from (\ref{eq10}) and (\ref{eq8}) that $x^p=u+h$ with  $\nu_1(h)>\Omega$. Set $h_0=P_{0,1}^p$ and $\Theta_0=-h$.
Then
$$
h_0^p-x^p=P_{0,1}^{p^2}-u+\Theta_0=P_{0,2}+\Theta_0.
$$

Now assume that there exists $h_k\in R_k$ such that

$$
h_k^p-x^p=(-1)^kP_{k,k+2}+\Theta_k
$$
with $\nu_1(\Theta_k)>\Omega$. Using equations (\ref{eq3}) and (\ref{eq4}) we have
$$
\begin{array}{lll}
P_{k,k+2}&=&u_{k+1}^{p^{2(k+1)}}P_{k+1,k+1}\\
&=& P_{k+1,0}^{p^{2(k+1)}}P_{k+1,k+1}\\
&=&-P_{k+1,k+3}+P_{k+1,k+2}^{p^2}+\Lambda_{k+1,k+3}.
\end{array}
$$
Set
$h_{k+1}=h_k+(-1)^{k+1}P_{k+1,k+2}^p$. Then
$$
\begin{array}{lll}
h_{k+1}^p-x^p&=&(-1)^{k+1}P_{k+1,k+2}^{p^2}+(-1)^{k+1}P_{k+1,k+3}+(-1)^kP_{k+1,k+2}^{p^2}+(-1)^k\Lambda_{k+1,k+3}+\Theta_k\\
&=&(-1)^{k+1}P_{k+1,k+3}+(-1)^k\Lambda_{k+1,k+3}+\Theta_k.
\end{array}
$$
Set $\Theta_{k+1}=(-1)^k\Lambda_{k+1,k+3}+\Theta_k$. In order to show that $\nu_1(\Theta_{k+1})>\Omega$, we must show that
$\nu_1(\Lambda_{k+1,k+3})>\Omega$.
By equation (\ref{eq5}) we have
$$
\begin{array}{lll}
\nu_1(\Lambda_{k+1,k+3})&\ge& \sum_{j=1}^{k+2}p^{4j-4k-8}+p^{-4k-4}\\
&=& p^{-4k-4}\left(\sum_{j=0}^{k+1}(p^{4})^j+1\right)\\
&=&p^{-4k-4}\left(\dfrac{1-(p^4)^{k+2}}{1-p^4}+1\right)\\
&=& \dfrac{p^{4(k+2)}+p^4-2}{p^{4k+4}(p^4-1)}.
\end{array}
$$
Thus
$$
\nu_1(\Lambda_{k+1,k+3})-\Omega\ge\frac{p^4-2}{p^{4k+4}(p^4-1)}>0.
$$

\vskip .2truein
We now prove statement 2). Fix $k\geq 0$. Suppose, by way of contradiction, that there exists $g\in R_k$ such that
$$
\nu_1(g^p-x^p)>1+\frac{1}{p^4}+\cdots+\frac{1}{p^{4(k+1)}}.
$$
Set $g'=g-h_k$. Then
$$
g^p-x^p= (g')^p+(h_k^p-x^p)=(g')^p+(-1)^kP_{k,k+2}+\Theta_k.
$$
Now, since $\{P_{k,i}\}_{i\geq 0}$ is a generating sequence in $R_k$, we may  write
$g'=\lambda G+H$, where $0\ne \lambda\in {\bf{k}}$, $\nu_1(g')=\nu_1(G)$, $\nu_1(H)>\nu_1(G)$ and
$$
G=u_k^mP_{k,1}^{a_1}P_{k,2}^{a_2}\cdots P_{k,n}^{a_n}
$$
with $m,n, a_i\in\NN_0$ and $a_i<p^2$ for all $i$ (see Definition \ref{genseq} and \cite[Lemma 5.1]{GK}).
Set
$$
\alpha=\nu_1(P_{k,k+2})=\sum_{j=0}^{k+1}p^{4j-4k-4}
$$
 and $\beta=\nu_1(G^p)$. We have $\alpha=\beta$, otherwise $\nu_1(g^p-x^p)\leq \alpha$.
By equation (\ref{eq1}) we have
$$
\beta= mp^{-2k+1}+\sum_{i=1}^na_i\left(\sum_{j=0}^{i-1}p^{4j-2i-2k+1}\right).
$$
Notice that if $a_i=0$ for all $i$, then $\beta=mp^{-2k+1}\neq \alpha$. Therefore we may suppose that $a_n\ne 0$. Then we can write
$$
\beta=a_np^{-2n-2k+1}+c_1p^{-2n-2k+3}
$$
for some $c_1\in \ZZ$ and
$$
\alpha=p^{-4(k+1)}+b_1p^{-4k}
$$
for some $b_1\in \ZZ$.  Notice that, if $2n+2k-3\geq 4k+4$, the equality $\alpha=\beta$ yields that $a_n$ is a multiple of $p^2$, a contradiction since $0<a_n<p^2$.

We thus have
$2n+2k-3<4k+4$; that is, $n\le k+3$. By a similar argument we must have $n=k+3$ and $a_{k+3}=p$. Thus, using equation (\ref{eq1})
$$
\begin{array}{lll}
0&\le& \beta-p\left(\nu_1(P_{k,k+3})\right)\\
&=& \beta-p\left(\sum_{j=0}^{k+2}p^{4j-4k-5}\right)\\
&=& \alpha-p\left(\sum_{j=0}^{k+2}p^{4j-4k-5}\right)\\
&=& \sum_{j=0}^{k+1}p^{4j-4k-4}-\sum_{j=0}^{k+2}p^{4j-4k-4}\\
&=& -p^4<0
\end{array}
$$
giving a contradiction.
\end{proof}

\medskip

\begin{Theorem}\label{Theorem2} With the above notation, suppose that $f\in K$. Then
$$
\nu_1\left(\frac{1}{x}-f\right)<-\frac{2}{p}+\frac{1}{p}\Omega<-\frac{1}{p^2}.
$$
\end{Theorem}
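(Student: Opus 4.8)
The plan is to reduce the desired bound on $\nu_1\!\left(\frac1x-f\right)$, which measures how well $\frac1x$ can be approximated by elements of $K$, to the bound of Theorem \ref{Theorem1}(2), which measures how well $x$ can be approximated. Since the second inequality $-\frac2p+\frac1p\Omega<-\frac1{p^2}$ is exactly (\ref{eq9}), I only need to establish the first inequality. The bridge between the two approximation problems is the elementary identity
\[
\frac1x-f=-\frac fx\left(x-\frac1f\right)\qquad(f\neq 0),
\]
together with the Frobenius relation $g^p-x^p=(g-x)^p$, valid in characteristic $p$, which lets me rewrite $\nu_1(g^p-x^p)=p\,\nu_1(x-g)$ and so turn Theorem \ref{Theorem1}(2) into a statement about $\nu_1(x-g)$.

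First I would dispose of the easy cases by comparing values. Since $\nu_1\!\left(\frac1x\right)=-\frac1p$ and $-\frac1p<-\frac2p+\frac1p\Omega$ (because $\Omega>1$), whenever $\nu_1(f)\neq-\frac1p$ — in particular when $f=0$ — the value $\nu_1\!\left(\frac1x-f\right)$ equals $\min\!\left\{-\frac1p,\nu_1(f)\right\}\le-\frac1p$, which already lies strictly below the target. Thus the entire difficulty is concentrated in the single case $\nu_1(f)=-\frac1p$, where the leading terms of $\frac1x$ and $f$ may cancel.

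In that remaining case I set $\tilde f=\frac1f\in K$, so that $\nu_1(\tilde f)=\frac1p\ge 0$ and hence $\tilde f$ lies in the valuation ring $V=\bigcup_k R_k$; therefore $\tilde f\in R_k$ for some $k$. Now Theorem \ref{Theorem1}(2) applies with $g=\tilde f$ and gives $\nu_1(\tilde f^{\,p}-x^p)\le 1+\frac1{p^4}+\cdots+\frac1{p^{4(k+1)}}<\Omega$, and dividing by $p$ through the Frobenius relation yields $\nu_1(x-\tilde f)<\frac1p\Omega$. Feeding this into the identity above and taking values,
\[
\nu_1\!\left(\frac1x-f\right)=\nu_1(f)-\nu_1(x)+\nu_1\!\left(x-\tilde f\right)=-\frac2p+\nu_1\!\left(x-\tilde f\right)<-\frac2p+\frac1p\Omega,
\]
which is the claimed first inequality.

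The genuine mathematical content has already been expended in proving Theorem \ref{Theorem1}(2); the only real subtlety in the present argument is ensuring its hypothesis is actually met, namely that in the critical case $\tilde f=\frac1f$ indeed lands in $\bigcup_k R_k$ — which is precisely why I first pin down $\nu_1(f)=-\frac1p$ — and noting that passing from approximating $x$ to approximating $\frac1x$ costs exactly the factor recorded by $-\frac fx$. I expect no further obstacle, since all remaining steps are value-group bookkeeping, with the strict inequality $1+\frac1{p^4}+\cdots+\frac1{p^{4(k+1)}}<\Omega$ supplying the needed strictness.
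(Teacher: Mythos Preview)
Your proof is correct and follows essentially the same route as the paper's: reduce to the case $\nu_1(f)=-\frac1p$, set $g=1/f\in V=\bigcup_k R_k$, and combine the Frobenius identity $(g-x)^p=g^p-x^p$ with Theorem~\ref{Theorem1}(2) and the bound $\sum_{j=0}^{k+1}p^{-4j}<\Omega$ to obtain $\nu_1\!\left(\frac1x-f\right)=\frac1p\,\nu_1(g^p-x^p)-\frac2p<\frac1p\Omega-\frac2p$. The only cosmetic difference is that the paper phrases the easy case as ``we may assume $\nu_1(\frac1x-f)>-\frac1p$'' rather than your direct case split on $\nu_1(f)$.
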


\begin{proof} We may assume that $f\in K$ is such that $\nu_1\left(\dfrac{1}{x}-f\right)>-\dfrac{1}{p}$. Then $\nu_1(f)=\nu_1\left( \dfrac{1}{x} \right)=-\dfrac{1}{p}$.
Set $g=\dfrac{1}{f}$. We have that $\nu(g)=\dfrac{1}{p}>0$, so $g\in V$ (the valuation ring of $\nu$), and thus $g\in R_k$ for some $k\ge 0$. Now we compute,
using Theorem \ref{Theorem1} and equation (\ref{eq9}),
$$
\begin{array}{lll}
\nu_1\left(\dfrac{1}{x}-f\right)&=&\nu_1\left(\dfrac{1}{x}-\dfrac{1}{g}\right)\\
&=&\nu_1\left(\dfrac{g-x}{xg}\right)=\nu_1(g-x)-\nu_1(xg)\\
&=&\dfrac{1}{p}\nu_1(g^p-x^p)-\dfrac{2}{p}\\
&\le& \dfrac{1}{p}\left(1+\dfrac{1}{p^4}+\cdots+\dfrac{1}{p^{4(k+1)}}\right)-\dfrac{2}{p}\\
&<& \dfrac{1}{p}\Omega-\dfrac{2}{p}<-\dfrac{1}{p^2}.
\end{array}
$$
\end{proof}

\begin{Theorem}\label{maintheorem} Let ${\bf k}$ be an algebraically closed field of characteristic $p>0$, let $K={\bf k}(u,v)$ be an algebraic function field, let $K_1=K(x)$, where $u=x^p/(1-x^{p-1})$.
Let $\nu$ be the valuation on $K$ determined by the generating sequence
$P_0=u$, $P_1=v$, $P_2=v^{p^2}-u$  and
$P_{i+1}=P_i^{p^2}-u^{p^{2i-2}}P_{i-1}$ for $i\ge 2$ in $R={\bf k}[u,v]_{(u,v)}$. Let $\nu_1$ be the unique extension of $\nu$ to $K_1$. Then $(K_1,\nu_1)$ is a dependent Artin Schreier defect extension of $(K, \nu)$.
\end{Theorem}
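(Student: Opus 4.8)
The plan is to make $K_1|K$ fit the hypotheses of the dependence criterion in Theorem \ref{thcriteria} and then read off the conclusion from the uniform value bound of Theorem \ref{Theorem2}. First I would fix the Artin-Schreier generator explicitly: as recalled in Section \ref{SM}, the element $\theta = 1/x$ is an Artin-Schreier generator of $K_1$ over $K$ with minimal polynomial $F(X) = X^p - X - 1/u$, so $\theta^p - \theta = 1/u \in K$. Because the tower $K^*|K$ is constructed to have defect $p^2$, with each layer an immediate degree-$p$ Artin-Schreier extension, the first layer $K_1|K$ is itself an Artin-Schreier \emph{defect} extension, and $\nu_1$ is its unique extension of $\nu$. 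Finally the value group satisfies $\nu K = \Gamma \simeq \bigcup_{i\geq 0} \mathbb{Z}/p^i$, which is exactly the standing assumption in Theorem \ref{thcriteria}. By \cite[Lemma 4.1]{Ku19} dependence is independent of the choice of Artin-Schreier generator, so working with $\theta = 1/x$ is no loss of generality.

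The decisive step is Theorem \ref{Theorem2}, which asserts that for every $f \in K$ one has $\nu_1(1/x - f) < -1/p^2$. Setting $m = 2$, this says precisely that there exists $m \in \mathbb{N}_0$ such that $\nu_1(\theta - f) < -1/p^m$ for all $f \in K$. By part 2) of Theorem \ref{thcriteria}, this condition is equivalent to $K_1|K$ being a dependent Artin-Schreier defect extension, which is exactly the assertion of the theorem. Thus the proof is a one-line invocation of the criterion once Theorem \ref{Theorem2} is available.

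From this vantage point the main theorem is a direct corollary, so the genuine difficulty lies entirely in the results that precede it. The hard part is establishing the uniform bound of Theorem \ref{Theorem2}, and behind it Theorem \ref{Theorem1}, which controls $\nu_1(g^p - x^p)$ for every $g$ in each free quadratic transform $R_k$ and pins down the maximal value $1 + 1/p^4 + \cdots + 1/p^{4(k+1)}$. That estimate in turn rests on the explicit recursive construction of generating sequences in the $R_k$ (Theorem \ref{genconstruction}) and on the careful bookkeeping of the error terms $\Lambda_{k,i}$ in the Corollary following it. Once those value computations are secured, the passage from an approximation $g \approx x$ (obtained by setting $g = 1/f$) to the bound on $\nu_1(1/x - f)$ is the short algebraic manipulation carried out in Theorem \ref{Theorem2}, and the present theorem then follows immediately.
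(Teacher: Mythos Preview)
Your proposal is correct and follows exactly the paper's own proof: identify $\theta=1/x$ as the Artin--Schreier generator, invoke Theorem~\ref{Theorem2} to obtain $\nu_1(\theta-f)<-1/p^2$ for all $f\in K$, and conclude dependence via part~2) of Theorem~\ref{thcriteria} with $m=2$. Your additional commentary on where the real work lies (Theorems~\ref{genconstruction} and~\ref{Theorem1}) is accurate and matches the paper's architecture.
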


\begin{proof} Recall that $\frac{1}{x}$ is an Artin-Schreier generator of $K_1$ over $K$. The conclusion now follows from Theorem \ref{thcriteria} and Theorem \ref{Theorem2}.
\end{proof}

\end{document}